\documentclass[leqno,11pt]{amsart}
\usepackage{}
\usepackage{amssymb, amsmath}
\usepackage{amsthm, amsfonts,mathrsfs}
 \setlength{\oddsidemargin}{0mm}
\setlength{\evensidemargin}{0mm} \setlength{\topmargin}{-15mm}
\setlength{\textheight}{240mm} \setlength{\textwidth}{155mm}

\def\inte#1{
\displaystyle\mathop{#1\kern0pt}^\circ }





\def\virgp{\raise 2pt\hbox{,}}
\def\cdotpv{\raise 2pt\hbox{;}}

\def\C{\mathop{\mathbb C\kern 0pt}\nolimits}
\def\DD{\mathop{\mathbb D\kern 0pt}\nolimits}
\def\EE{\mathop{{\mathbb E \kern 0pt}}\nolimits}
\def\K{\mathop{\mathbb K\kern 0pt}\nolimits}
\def\N{\mathop{\mathbb N\kern 0pt}\nolimits}
\def\Q{\mathop{\mathbb Q\kern 0pt}\nolimits}
\def\R{\mathop{\mathbb R\kern 0pt}\nolimits}
\def\SS{\mathop{\mathbb S\kern 0pt}\nolimits}
\def\ZZ{\mathop{\mathbb Z\kern 0pt}\nolimits}
\def\TT{\mathop{\mathbb T\kern 0pt}\nolimits}
\def\P{\mathop{\mathbb P\kern 0pt}\nolimits}




\newcommand{\beq}{\begin{equation}}
\newcommand{\eeq}{\end{equation}}
\newcommand{\ben}{\begin{eqnarray}}
\newcommand{\een}{\end{eqnarray}}
\newcommand{\beno}{\begin{eqnarray*}}
\newcommand{\eeno}{\end{eqnarray*}}


\newtheorem{thm}{Theorem}[section]
\newtheorem{lem}{Lemma}[section]
\newtheorem{rmk}{Remark}[section]

\newtheorem{prop}{Proposition}[section]
\renewcommand{\theequation}{\thesection.\arabic{equation}}

\begin{document}

\title[]
{Notes on Liouville type Theorems for the stationary compressible Navier-Stokes equations}

\author[Z. Li and P. Niu]{Zhouyu Li\textsuperscript{1}      \and   Pengcheng Niu\textsuperscript{1, *}}

\thanks{$^*$ Corresponding author}
\thanks{$^1$ School of Mathematics and Statistics, Northwestern Polytechnical University, Xi'an 710129, China}
\thanks{E-mail address: zylimath@163.com (Z. Li); pengchengniu@nwpu.edu.cn (P. Niu)}

\begin{abstract}
In this paper, we investigate the three dimensional stationary compressible Navier-Stokes equations, and obtain Liouville type theorems if a smooth solution $(\rho, \mathbf{u})$ satisfies some suitable conditions. In particular, our results improve and generalize the corresponding result of Li and Yu (2014) \cite{L2014}.

\end{abstract}


\date{}

\maketitle


\noindent {\sl Keywords:} Liouville type theorem;  Compressible Navier-Stokes equations; Lorentz space

\vskip 0.2cm

\noindent {\sl AMS Subject Classification (2000):} 35Q30, 76N10.  \\

\renewcommand{\theequation}{\thesection.\arabic{equation}}
\setcounter{equation}{0}
\section{Introduction}
This paper is concerned with the following stationary barotropic compressible Navier-Stokes equations on $\mathbb{R}^3$
\begin{equation}\label{NS}
    \begin{cases}
    \operatorname{div}(\rho \mathbf{u})=0, \\
    \operatorname{div}(\rho \mathbf{u}\otimes \mathbf{u})- \nu\Delta \mathbf{u}-(\lambda + \nu)\nabla \operatorname{div} \mathbf{u} +\nabla P =0, \\
    \end{cases}
\end{equation}
where the vector $\mathbf{u}$ denotes the flow velocity field and the scalar function $\rho$ represents the density of the fluid.
The pressure $P$ is given by the $\gamma-$law:
$$P(\rho)=a\rho^\gamma,$$
where $a>0$ and $\gamma>1$ are physical constants. The shear viscosity $\nu$ and bulk viscosity $\lambda$ are both constants
and satisfy
$$\nu>0\ , \ \lambda+\frac{2}{3}\nu>0.$$
For more physical explanations about \eqref{NS}, see \cite{F2004, L1998}.

In the past decades, the time-dependent compressible Navier-Stokes equations have been studied by many authors. See \cite{D2000, X1998} for global existence and blow-up criteria of solutions, and \cite{D1997, Q2009} for regularity criterion of weak solutions. In general, Liouville type theorems appear naturally when studying the regularity of the time-dependent Navier-Stokes equations. However, Liouville type theorems
have not been solved yet, as far as we know, even in the stationary case.

Here we consider the Liouville type theorems for the stationary compressible Navier-Stokes equations \eqref{NS}.
In \cite{Chae2012}, Chae showed the Liouville type theorems for the compressible Navier-Stokes equations on $\mathbb{R}^n$. In particular, he stated that if the smooth solution $(\rho, \mathbf{u})$
satisfies
\begin{equation}\label{1-2}
     \begin{split}
\|\rho\|_{L^\infty(\mathbb{R}^3)}+\|\nabla \mathbf{u}\|_{L^2(\mathbb{R}^3)}+\|\mathbf{u}\|_{L^\frac{3}{2}(\mathbb{R}^3)}<\infty,
     \end{split}
\end{equation}
then $u\equiv 0$ and $\rho=constant$. Later, Li and Yu in \cite{L2014} pointed out that the condition
\begin{equation*}\label{1-2a}
     \begin{split}
\|\rho\|_{L^\infty(\mathbb{R}^3)}+\|\nabla \mathbf{u}\|_{L^2(\mathbb{R}^3)}<\infty
     \end{split}
\end{equation*}
is very natural because most physical flows have bounded density and finite enstrophy, and improved the result of Chae, assuming that
\begin{equation}\label{1-3}
     \begin{split}
\|\rho\|_{L^\infty(\mathbb{R}^3)}+\|\nabla \mathbf{u}\|_{L^2(\mathbb{R}^3)}+\|\mathbf{u}\|_{L^\frac{9}{2}(\mathbb{R}^3)}<\infty.
     \end{split}
\end{equation}
The interested readers can also refer \cite{Li2020}, which proved a different result.
We note that the condition \eqref{1-3} is weaker than \eqref{1-2} in the sense that $\mathbf{u}$ can decay more slowly at infinity.
Thus, a natural and interesting problem is how to further weaken the integrability condition $\mathbf{u}\in L^{\frac{9}{2}}(\mathbb{R}^3)$. The purpose of this paper is to give a positive answer.
We state the first result of this paper as follows:
\begin{thm}\label{thm-1}
Suppose that $(\rho, \mathbf{u})$ is a smooth solution to \eqref{NS} with $\rho\in L^\infty (\mathbb{R}^3)$, $\nabla \mathbf{u}\in L^2(\mathbb{R}^3)$
and $\mathbf{u}\in L^{p, q}(\mathbb{R}^3)$ for $3< p <\frac{9}{2}$, $3\leq q\leq \infty$ or $p=q=3$. Then $\mathbf{u}\equiv 0$ and $\rho=constant$ on $\mathbb{R}^{3}$.
\end{thm}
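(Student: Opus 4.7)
The plan is to run a weighted Caccioppoli-type energy estimate with a standard radial cut-off $\varphi_R\in C_c^\infty(\mathbb{R}^3)$ satisfying $\varphi_R\equiv 1$ on $B_R$, $\operatorname{supp}\varphi_R\subset B_{2R}$, and $|\nabla^k\varphi_R|\leq CR^{-k}$ for $k=1,2$. I would test the momentum equation against $\mathbf{u}\varphi_R^2$ and integrate by parts: using $\operatorname{div}(\rho\mathbf{u})=0$ to reduce the convective term to $-\tfrac12\int\rho|\mathbf{u}|^2\mathbf{u}\cdot\nabla\varphi_R^2$, and the Bernoulli-type identity $\int P\operatorname{div}\mathbf{u}\,\varphi_R^2=\tfrac{1}{\gamma-1}\int P\mathbf{u}\cdot\nabla\varphi_R^2$ (obtained by multiplying continuity by $\rho^{\gamma-1}\varphi_R^2$), this produces the weighted identity
\[
\nu\int|\nabla\mathbf{u}|^2\varphi_R^2+(\lambda+\nu)\int(\operatorname{div}\mathbf{u})^2\varphi_R^2 = I_1+I_2+I_3+I_4,
\]
with $I_1,\ldots,I_4$ supported on the annulus $A_R:=B_{2R}\setminus B_R$ and coming respectively from the convective, Laplacian-cutoff, bulk divergence-cutoff, and pressure contributions. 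The task reduces to showing each $I_i\to 0$ as $R\to\infty$.

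The cubic, quadratic, and divergence-cutoff errors are all handled by Lorentz-H\"older. The weight $\nabla\varphi_R^2$, being essentially $R^{-1}$ times the characteristic function of $A_R$, satisfies $\|\nabla\varphi_R^2\|_{L^{p/(p-3),\sigma}(A_R)}\lesssim R^{2-9/p}$ for an admissible secondary index $\sigma$, so
\[
|I_1|\lesssim R^{2-9/p}\|\mathbf{u}\|_{L^{p,q}(A_R)}^{3},\qquad |I_2|\lesssim R^{1-6/p}\|\mathbf{u}\|_{L^{p,q}(A_R)}^{2},
\]
and $|I_3|\lesssim R^{1/2-3/p}\|\nabla\mathbf{u}\|_{L^2(A_R)}\|\mathbf{u}\|_{L^{p,q}(A_R)}$. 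In the admissible range ($3\le p<9/2$, or $p=q=3$) all three $R$-exponents are strictly negative, while the Lorentz and $L^2$ norms on $A_R$ are uniformly bounded by the global ones (they even vanish when $q<\infty$ by absolute continuity of the Lorentz norm, but boundedness already suffices).

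The main obstacle is the pressure term $I_4=\tfrac{\gamma}{\gamma-1}\int P\mathbf{u}\cdot\nabla\varphi_R^2$, since a direct Lorentz-H\"older yields the \emph{growing} factor $R^{2-3/p}$ as soon as $p>3$. I would circumvent this via the effective viscous flux $F:=(\lambda+2\nu)\operatorname{div}\mathbf{u}-P$. Taking divergence of the momentum equation gives the Poisson identity $\Delta F=-\partial_i\partial_j(\rho u_iu_j)$, hence, modulo an additive constant $\bar P$ from the harmonic remainder (which is constant by a growth-based Liouville argument applied to the sum $F+\cR_i\cR_j(\rho u_iu_j)$ whose $L^1(B_R)$-norm grows at most like $R^3$), one has $F=-\cR_i\cR_j(\rho u_iu_j)$ for the Riesz transforms $\cR_i$. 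Since $\rho\in L^\infty$ and $\mathbf{u}\in L^{p,q}$ give $\rho u_iu_j\in L^{p/2,q/2}$ by Lorentz-H\"older, and Riesz transforms are bounded on Lorentz spaces (valid for $p/2>1$, $q/2\ge 1$), one gets $\cR_i\cR_j(\rho u_iu_j)\in L^{p/2,q/2}$. As only $\nabla P$ enters the equation, I may replace $P$ by $P-\bar P$, a linear combination of $(\lambda+2\nu)\operatorname{div}\mathbf{u}\in L^2$ and a Riesz piece in $L^{p/2,q/2}$. The divergence part of $I_4$ is then controlled exactly as $I_3$, while Lorentz-H\"older on the Riesz part produces the prefactor $R^{2-9/p}\to 0$ paired with bounded Lorentz norms on $A_R$, so $I_4\to 0$.

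Passing to the limit yields $\int(|\nabla\mathbf{u}|^2+(\operatorname{div}\mathbf{u})^2)=0$, hence $\mathbf{u}$ is constant, and membership in $L^{p,q}(\mathbb{R}^3)$ forces $\mathbf{u}\equiv 0$; the momentum equation then reduces to $\nabla P=0$, so $\rho\equiv\text{const}$. The hard step is the pressure estimate: beyond the Bernoulli-type rewrite using continuity, it requires the effective-viscous-flux identity combined with the boundedness of Riesz transforms on Lorentz spaces, and particular care in the endpoint case $q=\infty$, where only weak-Lorentz norms on $A_R$ are available and one must rely on the strictly negative $R$-exponent $2-9/p$ rather than on any decay of the annular norms.
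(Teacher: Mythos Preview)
Your overall scheme is the same as the paper's: test with $\mathbf{u}\varphi_R^2$, push all non-pressure terms to the annulus via Lorentz--H\"older, and for the pressure take the divergence of the momentum equation to write $P=\bar P+p_1+p_2$ with $p_1=(-\Delta)^{-1}\partial_i\partial_j(\rho u_iu_j)\in L^{p/2,q/2}$ and $p_2=(\lambda+2\nu)\operatorname{div}\mathbf{u}\in L^2$. The annular estimates you state for $I_1,I_2,I_3$ and for the $p_1,p_2$ pieces match the paper's exactly.

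There is, however, one genuine gap in your pressure step. After your Bernoulli identity you have
\[
I_4=\frac{\gamma}{\gamma-1}\int P\,\mathbf{u}\cdot\nabla\varphi_R^2,
\]
and you then assert that ``since only $\nabla P$ enters the equation'' you may replace $P$ by $P-\bar P$. That replacement is \emph{not} free here: the Bernoulli identity has already traded $\nabla P$ for $P$ itself, and the discarded piece
\[
\frac{\gamma}{\gamma-1}\,\bar P\int \mathbf{u}\cdot\nabla\varphi_R^2=-\frac{\gamma}{\gamma-1}\,\bar P\int(\operatorname{div}\mathbf{u})\varphi_R^2
\]
does not vanish (one only knows $\operatorname{div}\mathbf{u}\in L^2$, not $L^1$), and a direct annular bound gives the growing factor $R^{2-3/p}$ for $p>3$. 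The constant $\bar P=c\ge 0$ need not be zero (it encodes the far-field value of $\rho$), so this term cannot simply be dropped.

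The paper repairs exactly this point by keeping the density in the transport: it writes $\nabla P=\frac{a\gamma}{\gamma-1}\,\rho\,\nabla\rho^{\gamma-1}$ and integrates by parts against the \emph{divergence-free} field $\rho\mathbf{u}$, obtaining
\[
I_4=\frac{a\gamma}{\gamma-1}\int \rho^{\gamma-1}\,\rho\mathbf{u}\cdot\nabla\varphi_R^2.
\]
Because $\int\rho\mathbf{u}\cdot\nabla\varphi_R^2=-\int\operatorname{div}(\rho\mathbf{u})\,\varphi_R^2=0$, one may now freely subtract the constant $(c/a)^{(\gamma-1)/\gamma}$ from $\rho^{\gamma-1}$. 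Setting $P_1:=\rho^{\gamma-1}-(c/a)^{(\gamma-1)/\gamma}$, the pointwise bound $|\rho P_1|\le C(|p_1|+|p_2|)$ then yields precisely your two annular estimates with prefactors $R^{2-9/p}$ and $R^{1/2-3/p}$. With this correction in place, your argument and the paper's coincide.
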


Now one can ask what happens for the values $ p\geq \frac{9}{2}$ $?$ We apply some ideas of the papers \cite{K2017, Wang2018} dealing with the Liouville type theorems for stationary incompressible Navier-Stokes equations. Let us define
$$M_{p, q}(R):= R^{\frac{2}{3}-\frac{3}{p}}\|\mathbf{u}\|_{L^{p, q}(R\leq |x|\leq 2R)}.$$ Our second result is
\begin{thm}\label{thm-2}
Suppose that $(\rho, \mathbf{u})$ is a smooth solution to \eqref{NS} with $\rho\in L^\infty (\mathbb{R}^3)$
and $\nabla \mathbf{u}\in L^2(\mathbb{R}^3).$  For $p\geq \frac{9}{2}$, $3\leq q\leq \infty$, assume that
\begin{equation*}
     \begin{split}
\liminf_{R \rightarrow \infty} M_{\,p, \,q}(R)<\infty,
     \end{split}
\end{equation*}
then
\begin{equation}\label{1-1a}
     \begin{split}
D(u):=\int_{\mathbb{R}^3}|\nabla \mathbf{u}|^2 \, dx \leq C_0\liminf_{R \rightarrow \infty}M^3_{\,p, \,q}(R).
     \end{split}
\end{equation}
If moreover assume
\begin{equation}\label{1-1}
     \begin{split}
\liminf_{R \rightarrow \infty}M^3_{\,p, \,q}(R)\leq \delta D(\mathbf{u})
\end{split}
     \end{equation}
for some $0<\delta<1/C_0,$ then $\mathbf{u}\equiv 0$ and $\rho=constant$ on $\mathbb{R}^{3}$.
\end{thm}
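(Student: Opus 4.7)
The plan is a Caccioppoli-type energy estimate: once \eqref{1-1a} is established, the second assertion follows by absorption, since substituting \eqref{1-1} into \eqref{1-1a} yields $(1-C_0\delta)D(\mathbf{u})\le 0$, forcing $\nabla\mathbf{u}\equiv 0$, so $\mathbf{u}$ is a constant vector $\mathbf{u}_0$; plugging $\mathbf{u}\equiv\mathbf{u}_0$ back into the definition gives $M_{p,q}(R)\approx|\mathbf{u}_0|R^{2/3}$, and $\liminf_R M_{p,q}(R)<\infty$ then forces $\mathbf{u}_0=0$, after which the momentum equation reduces to $\nabla P\equiv 0$ and hence $\rho$ is constant.

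To prove \eqref{1-1a}, I pick a radial cutoff $\phi_R\in C_c^\infty(\mathbb{R}^3)$ with $\phi_R\equiv 1$ on $B_R$, supported in $B_{2R}$, and satisfying $|\nabla\phi_R|\le C/R$. Multiplying the momentum equation in \eqref{NS} by $\phi_R^2\mathbf{u}$ and integrating by parts, using $\operatorname{div}(\rho\mathbf{u})=0$ to rewrite the convective term and combining the $\gamma$-law with continuity to obtain $\int P\operatorname{div}(\mathbf{u})\phi_R^2\,dx=\frac{1}{\gamma-1}\int P\mathbf{u}\cdot\nabla\phi_R^2\,dx$, leads to the energy identity
\begin{equation*}
\nu\int|\nabla\mathbf{u}|^2\phi_R^2\,dx+(\lambda+\nu)\int(\operatorname{div}\mathbf{u})^2\phi_R^2\,dx=\sum_{k=1}^4 I_k(R),
\end{equation*}
with each $I_k$ supported on the annulus $A_R:=B_{2R}\setminus B_R$. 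The convective boundary term $I_1$ is cubic in $\mathbf{u}$ and, by H\"older's inequality in Lorentz spaces together with the volume scaling $\|1_{A_R}\|_{L^{r,s}}\approx R^{3/r}$, obeys $|I_1|\le C_0 M_{p,q}^3(R)$; the exponent $R^{2/3-3/p}$ defining $M_{p,q}$ is designed precisely so that this bound holds. The quadratic viscous-boundary terms $I_2,I_3$ are bounded by $CR^{-1/6}\|\nabla\mathbf{u}\|_{L^2(A_R)}M_{p,q}(R)$, which vanishes as $R\to\infty$ because $\nabla\mathbf{u}\in L^2(\mathbb{R}^3)$ forces $\|\nabla\mathbf{u}\|_{L^2(A_R)}\to 0$.

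The main obstacle is the pressure piece $I_4=\frac{\gamma}{\gamma-1}\int P\,\mathbf{u}\cdot\nabla\phi_R^2\,dx$. Since $P=a\rho^\gamma$ has no spatial decay, the naive bound $|I_4|\lesssim R^{-1}\|P\|_\infty\|\mathbf{u}\|_{L^1(A_R)}\sim R^{4/3}M_{p,q}(R)$ blows up. To overcome this I introduce the effective viscous flux $\mathcal{G}:=(\lambda+2\nu)\operatorname{div}\mathbf{u}-P$, which, via the identity $\Delta=\nabla\operatorname{div}-\operatorname{curl}\operatorname{curl}$ applied to \eqref{NS}, satisfies $\Delta\mathcal{G}=\operatorname{div}\operatorname{div}(\rho\mathbf{u}\otimes\mathbf{u})$. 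Local Calder\'on-Zygmund theory then yields, for a suitable mean $\bar{\mathcal{G}}_R$,
\begin{equation*}
\|\mathcal{G}-\bar{\mathcal{G}}_R\|_{L^{p/2,q/2}(A_R)}\le C\|\rho\|_\infty\|\mathbf{u}\|_{L^{p,q}(A_R^*)}^2
\end{equation*}
on a slightly enlarged annulus $A_R^*$. Substituting $P=(\lambda+2\nu)\operatorname{div}\mathbf{u}-\mathcal{G}$ splits $I_4$ into a viscous-type piece already controlled, a Lorentz--H\"older piece from $\mathcal{G}-\bar{\mathcal{G}}_R$ of the critical order $M_{p,q}^3(R)$, and a mean-constant remainder $-\bar{\mathcal{G}}_R\int\operatorname{div}(\mathbf{u})\phi_R^2\,dx$ absorbed via $\nabla\mathbf{u}\in L^2$. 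Passing $R\to\infty$ along a subsequence realizing $\liminf_R M_{p,q}(R)$ yields \eqref{1-1a} by monotone convergence. This effective-viscous-flux argument plays the role of the Riesz transform bound $\|P\|_{L^{p/2}}\lesssim\|\mathbf{u}\|_{L^p}^2$ used in the incompressible case by \cite{K2017,Wang2018}, and it is the essential ingredient without which the energy estimate does not close.
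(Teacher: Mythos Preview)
Your handling of the pressure term contains a genuine gap. After substituting $P=(\lambda+2\nu)\operatorname{div}\mathbf{u}-\mathcal{G}$ and subtracting the local mean $\bar{\mathcal{G}}_R$, the remainder you call ``absorbed via $\nabla\mathbf{u}\in L^2$'' is
\[
\bar{\mathcal{G}}_R\int_{\mathbb{R}^3}\mathbf{u}\cdot\nabla\phi_R^2\,dx
= -\,\bar{\mathcal{G}}_R\int_{\mathbb{R}^3}\operatorname{div}\mathbf{u}\,\phi_R^2\,dx.
\]
Since $\mathcal{G}=(\lambda+2\nu)\operatorname{div}\mathbf{u}-a\rho^\gamma$ and $\rho\in L^\infty$, the annular average $\bar{\mathcal{G}}_R$ converges to $-c$, where $c\ge 0$ is the (in general nonzero) limiting value of $P$ at infinity; it does \emph{not} tend to zero. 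Writing the integral back as a boundary term supported on $A_R$ gives $|\bar{\mathcal{G}}_R\int\mathbf{u}\cdot\nabla\phi_R^2|\le CR^{-1}\|\mathbf{u}\|_{L^{p,q}(A_R)}\|1\|_{L^{p',q'}(A_R)}\sim R^{4/3}M_{p,q}(R)$, i.e.\ exactly the ``naive'' growth you set out to avoid. Nothing on the left-hand side (which is quadratic in $\operatorname{div}\mathbf{u}$) can absorb a term that is linear in $\operatorname{div}\mathbf{u}$ with a bounded, non-vanishing prefactor. A purely local Calder\'on--Zygmund estimate cannot see the constant $c$, so the argument does not close. (A secondary issue: for $\Delta\mathcal{G}=\operatorname{div}\operatorname{div}(\rho\mathbf{u}\otimes\mathbf{u})$ the local interior estimate removes a \emph{harmonic} function, not merely a constant, so the bound $\|\mathcal{G}-\bar{\mathcal{G}}_R\|_{L^{p/2,q/2}(A_R)}\lesssim\|\mathbf{u}\|_{L^{p,q}(A_R^*)}^2$ also needs justification.)

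The paper fixes this by a \emph{global} argument rather than a local one: taking divergence of the momentum equation shows $P-p_1-p_2$ is harmonic on $\mathbb{R}^3$ with $p_1:=(-\Delta)^{-1}\partial_i\partial_j(\rho u_iu_j)\in L^3$ and $p_2:=(\lambda+2\nu)\operatorname{div}\mathbf{u}\in L^2$ (using $\nabla\mathbf{u}\in L^2$ and Sobolev), so by Liouville $P=c+p_1+p_2$ for an explicit constant $c\ge 0$. The $\gamma$-law is then exploited through $P_1:=\rho^{\gamma-1}-(c/a)^{(\gamma-1)/\gamma}$, giving $\nabla P=\tfrac{a\gamma}{\gamma-1}\rho\nabla P_1$; integrating by parts against $\operatorname{div}(\rho\mathbf{u})=0$ produces the boundary term $\int P_1\rho\,\mathbf{u}\cdot\nabla\phi_R^2$ with the pointwise bound $|P_1\rho|\le C(|p_1|+|p_2|)$. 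This replaces your uncontrolled constant by functions with genuine global decay, after which the $p_1$-piece is estimated by $M_{p,q}^3(R)$ and the $p_2$-piece by $R^{-1/6}\|\nabla\mathbf{u}\|_{L^2}M_{p,q}(R)\to 0$. Your outline for the remaining terms, and for deducing $\mathbf{u}\equiv 0$, $\rho=\mathrm{const}$ from \eqref{1-1a}, is correct.
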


Indeed, we establish the Liouville type theorems in the setting of Lorentz spaces, which can be regarded as a natural generalisation of Lesbesgue spaces. Roughy speaking, we prove that a  smooth solution $(\rho, \mathbf{u})$ is trivial if $\rho\in L^\infty (\mathbb{R}^3)$, $\nabla \mathbf{u}\in L^2(\mathbb{R}^3)$ and $\mathbf{u}\in L^{p, q}$ with $ p \geq 3$. Compared with the case in \cite{K2017, Wang2018}, we investigate the Liouville type theorems for the stationary compressible Navier-Stokes equations \eqref{NS}. Note that in \eqref{NS}, we need the condition
$\nabla \mathbf{u}\in L^2(\mathbb{R}^3)$ in order to deal with the pressure term. On the other hand, we more carefully discuss $\mathbf{u}\in L^{p, q}$ for the two cases: $3\leq p < \frac{9}{2}$ and $p\geq\frac{9}{2}$ in contrast to the result given in \cite{K2017, Wang2018}.

\begin{rmk}
{\rm{(i)}}. It should be noted that if we consider the integrability condition $\mathbf{u}\in L^{p, q}(\mathbb{R}^3)$ with $3< p <\frac{9}{2}$, $3\leq q \leq \infty$ or $p=q=3$ in Theorem \ref{thm-1}, then we do not need any additional assumption which is similar to \eqref{1-1}.\\
{\rm{(ii)}}. For the value $p=\frac{9}{2}$ and $q=\infty$ in Theorem \ref{thm-2},
our result can be regarded as a relaxation to the corresponding result of Li and Yu \cite{L2014}.
\end{rmk}

The organization of this paper is as follows: In Section 2, we collect some elementary facts. Section 3 is devoted to obtaining a prior estimate, which is the key of our proof.
 Finally, the proofs of Theorem \ref{thm-1} and \ref{thm-2} are given in Section 4.


\renewcommand{\theequation}{\thesection.\arabic{equation}}
\setcounter{equation}{0} 

\section{Preliminaries}
We recall the definition of Lorentz space. Given $1\leq p< \infty , 1\leq q\leq \infty$, we say that
a measurable function $f \in L^{p, q}(\mathbb{R}^3)$ if $\|f\|_{L^{p, q}(\mathbb{R}^3)}< \infty,$
where
\begin{equation*}\label{2-1a}
\|f\|_{L^{p, q}(\mathbb{R}^3)}:=
    \begin{cases}
        \left(\int_0^{\infty}t^{q-1}|\{x\in \mathbb{R}^3: |f(x)|> t\}|^{\frac{q}{p}}\, dt\right)^{\frac{1}{q}}, \quad \mathrm{if}\, \quad q<+\infty, \\
        \sup\limits_{t>0} t|\{x\in \mathbb{R}^3: |f(x)|> t\}|^{\frac{1}{p}}, \quad \mathrm{if}\,  \quad q=+\infty.
    \end{cases}
\end{equation*}
The space satisfies the continuous embeddings
\begin{equation*}
  L^{p}(\mathbb{R}^3)=L^{p, p}(\mathbb{R}^3)\hookrightarrow L^{p, q}(\mathbb{R}^3)\hookrightarrow L^{p, \infty}(\mathbb{R}^3) \ , \  p\leq q <\infty.
\end{equation*}
It should be stressed that $\|\cdot\|_{L^{p, q}}$ is a quasi-norm, namely, $\|\cdot\|_{L^{p, q}}$ do not satisfy the usual triangle inequality. Instead, we have
$$\|f+g\|_{L^{p, q}}\leq C(p, q) (\|f\|_{L^{p, q}}+\|g\|_{L^{p, q}}),$$
with $C(p,q)=2^{1/p}\max (1, 2^{(1-q)/ q})$.
See \cite{L1950} for details.

The following inequalities in Lorentz spaces are useful.

\begin{lem}[H\"{o}lder inequality, \cite{O1963}]\label{Lem2-1}
\label{lem}
Let $f\in L^{p_1,q_1}(\mathbb{R}^3)$ and $g\in L^{p_2,q_2}(\mathbb{R}^3)$ with $1\leq p_1, p_2\leq\infty$, $1\leq q_1, q_2\leq\infty.$
Then $fg\in L^{p,q}(\mathbb{R}^3)$ with $\frac{1}{p}=\frac{1}{p_1}+\frac{1}{p_2}$, $\frac{1}{q}\leq\frac{1}{q_1}+\frac{1}{q_2}$
and
\begin{equation*}\label{2-1}
     \begin{split}
   \|fg\|_{L^{p,q}(\mathbb{R}^3)}\leq C\|f\|_{L^{p_1,q_1}(\mathbb{R}^3)}\|g\|_{L^{p_2,q_2}(\mathbb{R}^3)}
     \end{split}
\end{equation*}
for a constant $C>0.$
\end{lem}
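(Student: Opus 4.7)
The plan is to reduce Lemma~\ref{Lem2-1} to a statement about decreasing rearrangements, then combine a pointwise rearrangement bound with the ordinary H\"older inequality on a weighted half-line. First, I would replace the distribution-function definition of $\|\cdot\|_{L^{p,q}}$ by its equivalent form in terms of the decreasing rearrangement $f^{*}(t):=\inf\{\alpha>0:|\{|f|>\alpha\}|\le t\}$, namely
\[
\|f\|_{L^{p,q}}\sim\left(\int_{0}^{\infty}\bigl(t^{1/p}f^{*}(t)\bigr)^{q}\,\frac{dt}{t}\right)^{1/q}
\]
(with the natural supremum when $q=\infty$). This equivalence follows from Fubini/layer-cake applied to $\int_{0}^{\infty}t^{q-1}|\{|f|>t\}|^{q/p}\,dt$ and costs only an explicit constant depending on $p,q$.

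Second, I would invoke the standard subadditive rearrangement inequality
\[
(fg)^{*}(t_{1}+t_{2})\le f^{*}(t_{1})\,g^{*}(t_{2}),
\]
which is immediate from the inclusion $\{|fg|>f^{*}(t_{1})g^{*}(t_{2})\}\subset\{|f|>f^{*}(t_{1})\}\cup\{|g|>g^{*}(t_{2})\}$. Setting $t_{1}=t_{2}=t/2$ gives $(fg)^{*}(t)\le f^{*}(t/2)g^{*}(t/2)$. Substituting into the rearrangement form of the norm of $fg$ and changing variables $t=2s$ reduces the problem to bounding
\[
\left(\int_{0}^{\infty}\bigl(s^{1/p_{1}}f^{*}(s)\bigr)^{q}\bigl(s^{1/p_{2}}g^{*}(s)\bigr)^{q}\,\frac{ds}{s}\right)^{1/q},
\]
where the exponent condition $1/p_{1}+1/p_{2}=1/p$ is used to align the weights.

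Third, I would apply H\"older's inequality on the measure space $\bigl((0,\infty),\,ds/s\bigr)$ with conjugate exponents $q_{1}/q$ and $q_{2}/q$ in the equality case $1/q=1/q_{1}+1/q_{2}$; this directly produces the bound $C\|f\|_{L^{p_{1},q_{1}}}\|g\|_{L^{p_{2},q_{2}}}$. To accommodate the hypothesis $1/q\le 1/q_{1}+1/q_{2}$, I would first establish the equality case with $q_{0}$ defined by $1/q_{0}=1/q_{1}+1/q_{2}$ and then invoke the monotone embedding $L^{p,q_{0}}(\mathbb{R}^{3})\hookrightarrow L^{p,q}(\mathbb{R}^{3})$ recalled in the preamble (valid since $q_{0}\le q$). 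The limiting cases where one of $q,q_{1},q_{2}$ equals $\infty$ are handled separately by replacing integrals with suprema, which trivialises the H\"older step.

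I expect the main obstacle to be bookkeeping rather than substance: tracking the quasi-norm constants through the equivalence of the two definitions of $\|\cdot\|_{L^{p,q}}$ and handling the strict-inequality case $1/q<1/q_{1}+1/q_{2}$ via the embedding. The rearrangement inequality itself is elementary, and once the rearrangement form of the norm is in hand the reduction to H\"older on $(0,\infty)$ is essentially mechanical.
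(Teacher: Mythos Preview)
The paper does not actually prove Lemma~\ref{Lem2-1}; it is stated as a preliminary result with a citation to O'Neil~\cite{O1963} and used as a black box in Section~3. So there is no paper-proof to compare your proposal against.

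That said, your outline is a correct and standard route to the Lorentz--H\"older inequality. The key steps---passing to the decreasing-rearrangement form of the norm, the pointwise bound $(fg)^{*}(t)\le f^{*}(t/2)\,g^{*}(t/2)$ obtained from the subadditive inequality $(fg)^{*}(t_{1}+t_{2})\le f^{*}(t_{1})g^{*}(t_{2})$, H\"older on $\bigl((0,\infty),ds/s\bigr)$, and then the embedding $L^{p,q_{0}}\hookrightarrow L^{p,q}$ for the strict-inequality case---are all sound. Two minor remarks: the embedding stated in the paper's preliminaries is only the special chain $L^{p,p}\hookrightarrow L^{p,q}\hookrightarrow L^{p,\infty}$, so if you want to invoke the general monotonicity $L^{p,q_{0}}\hookrightarrow L^{p,q}$ for $q_{0}\le q$ you should state it separately (it follows, e.g., from H\"older on $(0,\infty)$ after truncation, or from the power-mean inequality for rearrangements). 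Also, the equivalence of the distribution-function and rearrangement forms of $\|\cdot\|_{L^{p,q}}$ is in fact an \emph{equality} up to the explicit factor $p^{1/q}$, so the bookkeeping you anticipate is lighter than you suggest.
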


\begin{lem}[$\operatorname{Calder\acute{o}n}$-Zygmund inequality, \cite{B1976}]\label{Lem2-2}
Let $\Omega$ be a bounded domain in $\mathbb{R}^n$, $1<p<\infty$, $1<q\leq\infty$ and $f\in L^{p, q}(\Omega)$.
Then
\begin{equation*}\label{2-2}
     \begin{split}
   \|\nabla^2(-\Delta)^{-1}f\|_{L^{p,q}(\Omega)}\leq C\|f\|_{L^{p,q}(\Omega)},
     \end{split}
\end{equation*}
where the constant $C>0$ is independent of $\Omega$.
\end{lem}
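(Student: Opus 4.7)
The plan is to reduce the bounded-domain statement to a whole-space statement, invoke the classical $L^r$ theory of Calder\'on--Zygmund singular integrals, and then upgrade from Lebesgue to Lorentz spaces via real interpolation.

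First, given $f\in L^{p,q}(\Omega)$, I would extend $f$ by zero to obtain $\tilde f$ on $\mathbb{R}^n$. Because the Lorentz quasi-norm only depends on the distribution function of $|f|$, this extension is isometric: $\|\tilde f\|_{L^{p,q}(\mathbb{R}^n)}=\|f\|_{L^{p,q}(\Omega)}$. The operator $T:=\nabla^2(-\Delta)^{-1}$ is then convolution with the matrix-valued kernel $\nabla^2\Phi$, where $\Phi$ is the Newtonian potential. This kernel is a standard Calder\'on--Zygmund kernel: smooth away from the origin, homogeneous of degree $-n$, with vanishing mean on spheres.

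Second, the classical Calder\'on--Zygmund theorem then yields that $T$ is bounded on $L^r(\mathbb{R}^n)$ for every $1<r<\infty$, with operator norm depending only on $n$ and $r$. Pick $1<p_0<p<p_1<\infty$ with $1/p=(1-\theta)/p_0+\theta/p_1$; then $T:L^{p_j}(\mathbb{R}^n)\to L^{p_j}(\mathbb{R}^n)$ is bounded for $j=0,1$. Third, one invokes the classical identification
\begin{equation*}
(L^{p_0}(\mathbb{R}^n),L^{p_1}(\mathbb{R}^n))_{\theta,q}=L^{p,q}(\mathbb{R}^n), \qquad 1\le q\le\infty,
\end{equation*}
obtained via the real $K$-method (see Bergh--L\"ofstr\"om). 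Real interpolation of the two endpoint bounds then gives boundedness of $T$ on $L^{p,q}(\mathbb{R}^n)$ with constant depending only on $n,p,q$. Restricting the output to $\Omega$ and recalling the isometric extension at the input side yields the stated inequality with constant independent of $\Omega$.

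The main technical point is ensuring that this a priori bound defines a single bounded operator on all of $L^{p,q}$ rather than merely on a dense subclass: for $q<\infty$ this follows from density of test functions, while for $q=\infty$ one uses the endpoint-preserving nature of the $K$-functional construction. Since both the $L^r$-boundedness of Riesz-type transforms and the Calder\'on--Lorentz identification are classical, the lemma is essentially a repackaging of well-known facts, which is why the authors simply cite \cite{B1976}.
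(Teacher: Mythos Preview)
The paper does not give its own proof of this lemma; it is stated as a known result with a citation to Bergh--L\"ofstr\"om \cite{B1976}. Your proposal is exactly the standard argument underlying that citation: classical $L^r$-boundedness of the Riesz-type operator $\nabla^2(-\Delta)^{-1}$ combined with the real-interpolation identification $(L^{p_0},L^{p_1})_{\theta,q}=L^{p,q}$, together with zero extension to make the constant domain-independent. So your approach is correct and coincides with what the paper implicitly relies on.

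One small caveat worth flagging: your zero-extension reading of the lemma (so that $f$ is supported in $\Omega$) is the natural way to make the statement true with a constant independent of $\Omega$. Note, however, that when the authors later apply the lemma in \eqref{3-6b} to $p_1=(-\Delta)^{-1}\partial_i\partial_j(\rho u_iu_j)$ on an annulus, the source $\rho u_iu_j$ is not supported in that annulus; the bound there is really the whole-space estimate $\|p_1\|_{L^{p/2,q/2}(\mathbb{R}^3)}\le C\|\rho\|_{L^\infty}\|\,|\mathbf u|^2\|_{L^{p/2,q/2}(\mathbb{R}^3)}$ followed by restriction on the left, not a local-to-local estimate. This is a looseness in the paper's application rather than in your proof of the lemma.
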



\renewcommand{\theequation}{\thesection.\arabic{equation}}
\setcounter{equation}{0} 

\section{A priori estimate}
In this section, we derive a local estimate of $\nabla \mathbf{u}$ by means of $\mathbf{u}$:
\begin{prop}\label{prop3-1}
Let $(\rho, \mathbf{u})$ be a smooth solution to \eqref{NS} with $\rho\in L^\infty (\mathbb{R}^3)$ and $\nabla u\in L^2(\mathbb{R}^3)$.
If $p > 3 $, $3\leq q\leq \infty$ or $p=q=3$,
then we have
\begin{equation}\label{3-0a}
     \begin{split}
     \int_{|x|\leq R}|\nabla \mathbf{u}|^2 \, dx + \int_{|x|\leq R}|\operatorname{div} \mathbf{u}|^2 \, dx \leq C_0 D_1,
     \end{split}
\end{equation}
where $D_1:=R^{1-\frac{6}{p}}\|\mathbf{u}\|^2_{L^{p, q}(R\leq|x|\leq2R)}+ R^{2-\frac{9}{p}}\|\mathbf{u}\|^3_{L^{p, q}(R\leq|x|\leq2R)}+ R^{\frac{1}{2}-\frac{3}{p}}\|\mathbf{u}\|_{L^{p, q}(R\leq|x|\leq2R)}$ and the constant $C_0$ is independent of $R>0$.

\end{prop}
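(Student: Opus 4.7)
The approach is a weighted energy estimate. Fix a cutoff $\phi\in C_c^\infty(\mathbb{R}^3)$ with $\phi\equiv 1$ on $B_R$, $\operatorname{supp}\phi\subset B_{2R}$, and $|\nabla\phi|\leq C/R$, and set $A:=\{R\leq|x|\leq 2R\}$. Testing the momentum equation in \eqref{NS} against $\mathbf{u}\phi^2$ and integrating by parts on the viscous terms produces the principal quantities $\nu\int|\nabla\mathbf{u}|^2\phi^2$ and $(\lambda+\nu)\int|\operatorname{div}\mathbf{u}|^2\phi^2$ on the LHS, together with two gradient-commutator terms supported in $A$; these commutators are absorbed into the LHS by Young's inequality, leaving a residue of order $CR^{-2}\int_A|\mathbf{u}|^2\,dx$.

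The algebraic heart of the argument is that both the convective and pressure contributions can be localized to the annulus $A$ via the continuity equation. Using $\operatorname{div}(\rho\mathbf{u})=0$,
\begin{equation*}
\int\operatorname{div}(\rho\mathbf{u}\otimes\mathbf{u})\cdot\mathbf{u}\phi^2\,dx \;=\; -\int\rho|\mathbf{u}|^2\phi(\mathbf{u}\cdot\nabla\phi)\,dx,
\end{equation*}
and the pointwise identity $\mathbf{u}\cdot\nabla P=-\gamma P\operatorname{div}\mathbf{u}$ (which follows from $P=a\rho^\gamma$ combined with continuity) gives, after one integration by parts,
\begin{equation*}
\int\nabla P\cdot\mathbf{u}\phi^2\,dx \;=\; -\frac{2\gamma}{\gamma-1}\int P\phi(\mathbf{u}\cdot\nabla\phi)\,dx.
\end{equation*}
Hence every RHS term of the resulting energy identity is an annular integral paired with $\phi\nabla\phi$.

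Each such annular integral is then bounded by the H\"older inequality in Lorentz spaces (Lemma~\ref{Lem2-1}), applied with $|\mathbf{u}|^k\in L^{p/k,q/k}(A)$ and $\mathbf{1}_A\in L^{p/(p-k),\infty}$ -- the admissibility of the second Lorentz index being precisely the role of the restriction $3\leq q\leq\infty$ or $p=q=3$. This yields $\int_A|\mathbf{u}|^k\,dx\leq C\|\mathbf{u}\|_{L^{p,q}(A)}^{k}\,R^{3-3k/p}$ for $k\in\{1,2,3\}$. Multiplying by the $R^{-2}$ and $\|\rho\|_\infty R^{-1}$ coming respectively from the gradient-commutator residue and the convective cutoff, the $k=2$ and $k=3$ cases produce the first and second summands of $D_1$, namely $R^{1-6/p}\|\mathbf{u}\|_{L^{p,q}(A)}^2$ and $R^{2-9/p}\|\mathbf{u}\|_{L^{p,q}(A)}^3$.

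The delicate step -- and the main obstacle -- is the pressure integral $\|P\|_\infty R^{-1}\int_A|\mathbf{u}|$: the na\"ive $k=1$ bound yields only $CR^{2-3/p}\|\mathbf{u}\|_{L^{p,q}(A)}$, which is too coarse to drive Theorem~\ref{thm-2}. To squeeze out the sharper scaling $R^{1/2-3/p}\|\mathbf{u}\|_{L^{p,q}(A)}$ claimed in $D_1$, one must exploit the extra hypothesis $\nabla\mathbf{u}\in L^2(\mathbb{R}^3)$: for instance by subtracting the annular mean $\bar\mathbf{u}_A$ (so that the vanishing moment $\int\phi\nabla\phi=0$ kills the leading constant part), invoking the scale-invariant Sobolev--Poincar\'e bound $\|\mathbf{u}-\bar\mathbf{u}_A\|_{L^6(A)}\lesssim \|\nabla\mathbf{u}\|_{L^2(A)}$ against the pairing $\|\nabla\phi\|_{L^{6/5}(A)}\sim R^{3/2}$, and finally absorbing the resulting $\|\nabla\mathbf{u}\|_{L^2(A)}$-piece back into the LHS through a careful Young-type interpolation. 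This interplay between the continuity-equation identities, the Lorentz--H\"older exponents and an annular Sobolev--Poincar\'e estimate is where the proof is genuinely delicate; the remaining parts are routine bookkeeping.
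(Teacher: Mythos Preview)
Your overall framework (cut-off, test the momentum equation against $\mathbf{u}\phi^2$, localize all right-hand terms to the annulus $A$ via $\operatorname{div}(\rho\mathbf{u})=0$, and estimate each annular integral with Lorentz--H\"older) is exactly the scheme the paper uses, and your treatment of the viscous commutators and the convective term matches the paper's $I_1$, $I_2$, $I_3$.

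The gap is in the pressure step. Your proposed fix---subtract $\bar{\mathbf{u}}_A$ and invoke $\int\phi\nabla\phi=0$---does not work, because the integrand carries the non-constant factor $P$: the ``constant'' piece is
\[
\bar{\mathbf{u}}_A\cdot\!\int P\,\phi\nabla\phi\,dx,
\]
and this is \emph{not} zero (it would be only if $P$ were constant). If you try instead to estimate it directly, $|\bar{\mathbf{u}}_A|\lesssim R^{-3/p}\|\mathbf{u}\|_{L^{p,q}(A)}$ while $\bigl|\int P\phi\nabla\phi\bigr|\lesssim \|P\|_\infty R^{2}$, which reproduces the coarse $R^{2-3/p}\|\mathbf{u}\|_{L^{p,q}(A)}$ you wanted to avoid. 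Likewise, the Sobolev--Poincar\'e piece paired with $\|P\|_\infty$ and $\|\nabla\phi\|_{L^{6/5}}$ scales like $R^{3/2}\|\nabla\mathbf{u}\|_{L^2(A)}$, and this cannot be absorbed into the left-hand side since $\phi$ need not be bounded below on $A$. In short, the bare bound $|P|\le\|P\|_\infty$ is too crude; the information that $P$ itself decays (in a suitable sense) at infinity must enter.

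The paper extracts precisely this decay from the \emph{structure} of the equation. Taking the divergence of the momentum equation yields $\Delta(P-p_1-p_2)=0$ with
\[
p_1:=(-\Delta)^{-1}\partial_i\partial_j(\rho u_iu_j),\qquad p_2:=(\lambda+2\nu)\operatorname{div}\mathbf{u},
\]
and a Liouville-type lemma (Lemma~\ref{Lem3-1}, from \cite{L2014}) shows $P=c+p_1+p_2$ for a constant $c\ge0$. Writing $\nabla P=\frac{a\gamma}{\gamma-1}\rho\nabla(\rho^{\gamma-1})$ and subtracting the constant $(c/a)^{(\gamma-1)/\gamma}$ one obtains, after one integration by parts and $\operatorname{div}(\rho\mathbf{u})=0$, an annular integral bounded by $CR^{-1}\!\int_A(|p_1|+|p_2|)|\mathbf{u}|$. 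The $p_1$ part, via Calder\'on--Zygmund in Lorentz spaces (Lemma~\ref{Lem2-2}), contributes $CR^{2-9/p}\|\mathbf{u}\|_{L^{p,q}(A)}^3$; the $p_2$ part gives
\[
CR^{-1}\|\operatorname{div}\mathbf{u}\|_{L^2}\,\|\mathbf{u}\|_{L^{p,q}(A)}\,\|1\|_{L^{\frac{2p}{p-2},\frac{2q}{q-2}}(A)}
\;\le\; C\,\|\nabla\mathbf{u}\|_{L^2(\mathbb{R}^3)}\,R^{\frac12-\frac3p}\|\mathbf{u}\|_{L^{p,q}(A)},
\]
which is exactly the third summand in $D_1$ (the global quantity $\|\nabla\mathbf{u}\|_{L^2(\mathbb{R}^3)}$ is finite by hypothesis and is absorbed into $C_0$). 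This elliptic decomposition of the pressure---not a Poincar\'e trick on $\mathbf{u}$---is the missing idea.
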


\begin{proof}
Let $\varphi \in C_0^\infty (\mathbb{R}^3)$ be a radial cut-off function satisfying
\begin{equation*}\label{3-1}
\varphi(|x|)=
    \begin{cases}
        1, \quad \mathrm{if}\,  |x|< 1, \\
        0, \quad \mathrm{if}\,  |x|> 2,
    \end{cases}
\end{equation*}
and $0 \leq \varphi (|x|) \leq 1 $ for $1\leq |x|\leq2.$ For each given $R>0$, we define
$\varphi_R(x):=\varphi(\frac{|x|}{R})$ satisfying
$$\|\nabla^k \varphi_R\|_{L^\infty} \leqslant CR^{-k}$$
for $k=0, 1, 2$ with some positive constant $C$ independent of $x\in \mathbb{R}^3.$

Taking the inner product of  $\eqref{NS}_2$ with $\mathbf{u}\varphi_R^2$ and integrating by parts over $\mathbb{R}^3$,
it follows
\begin{equation*}\label{3-2}
     \begin{split}
   &\nu\int_{\mathbb{R}^3}\nabla \mathbf{u} :\nabla(\mathbf{u}\varphi_R^2)\, dx + (\lambda+\nu)\int_{\mathbb{R}^3}\operatorname{div} \mathbf{u} \operatorname{div} (\mathbf{u}\varphi_R^2)\, dx
   +\int_{\mathbb{R}^3} \operatorname{div} (\rho \mathbf{u}\otimes \mathbf{u})\cdot \mathbf{u}\varphi_R^2\, dx\\
   &+ \int_{\mathbb{R}^3}\nabla P\cdot \mathbf{u}\varphi_R^2\, dx=0.
     \end{split}
\end{equation*}
Applying the fact $\operatorname{div}(\rho \mathbf{u})=0$, we deduce that
\begin{equation}\label{3-3}
     \begin{split}
   &\nu\int_{\mathbb{R}^3}|\nabla \mathbf{u}|^2 \varphi_R^2\, dx + (\lambda+\nu)\int_{\mathbb{R}^3}|\operatorname{div} \mathbf{u}|^2 \varphi_R^2\, dx\\
   &=-2\nu\int_{\mathbb{R}^3}\varphi_R \nabla \mathbf{u} : \mathbf{u}\otimes \nabla \varphi_R \, dx
   -2(\lambda+\nu)\int_{\mathbb{R}^3} \varphi_R \operatorname{div} \mathbf{u} \mathbf{u}\cdot \nabla \varphi_R \, dx\\
   &\quad-\int_{\mathbb{R}^3}\rho \mathbf{u}\cdot \nabla \mathbf{u}\cdot \mathbf{u}\varphi_R^2 \, dx
   - \int_{\mathbb{R}^3}\nabla P\cdot \mathbf{u}\varphi_R^2\, dx\\
   &=\sum_{i=1}^4 I_i.
     \end{split}
\end{equation}

In the following, we estimate $I_i$ term by term. we assume $ p > 3$, $3\leq q\leq \infty$ or $p=q=3$.
For $I_1$, we get
\begin{equation*}
     \begin{split}
   I_1
   &=\nu\int_{\mathbb{R}^3} \mathbf{u}\cdot\operatorname{div}(\mathbf{u} \otimes \nabla (\varphi_R^2)) \, dx\\
   &=\nu\int_{\mathbb{R}^3} \mathbf{u}\cdot (\nabla \mathbf{u} \cdot \nabla (\varphi_R^2)+\mathbf{u} \Delta (\varphi_R^2)) \, dx\\
    &=\nu\int_{\mathbb{R}^3}|\mathbf{u}|^2 (\varphi_R\Delta \varphi_R+|\nabla \varphi_R|^2) \, dx.
     \end{split}
\end{equation*}
Using Lemma \ref{Lem2-1} implies that
\begin{equation}\label{3-4}
     \begin{split}
   |I_1|&\leq C (\nu) \int_{R\leq|x|\leq 2R}|\mathbf{u}|^2 (|\varphi_R\Delta \varphi_R|+|\nabla \varphi_R|^2) \, dx\\
   &\leq C  R^{-2} \||\mathbf{u}|^2\|_{L^{\frac{p}{2}, \frac{q}{2}}(R\leq|x|\leq 2R)}
   \|1\|_{L^{\frac{p}{p-2}, \frac{q}{q-2}}(R\leq|x|\leq 2R)}\\
   &\leq C  R^{1-\frac{6}{p}}\|\mathbf{u}\|^2_{L^{p, q}(R\leq|x|\leq 2R)}.
     \end{split}
\end{equation}

For $I_2$,  an application of Young inequality yields
\begin{equation}\label{3-5}
     \begin{split}
   |I_2|&\leq C(\lambda+\nu) \int_{R\leq|x|\leq 2R}| \varphi_R\operatorname{div} \mathbf{u}|| \mathbf{u}||\nabla \varphi_R| \, dx\\
   &\leq C(\lambda+\nu) R^{-1}\| \varphi_R\operatorname{div} \mathbf{u}\|_{L^{\frac{p}{p-1}, \frac{q}{q-1}}(R\leq|x|\leq 2R)}\|\mathbf{u}\|_{L^{p, q}(R\leq|x|\leq 2R)}\\
   &\leq C(\lambda+\nu) R^{-1}\| \varphi_R\operatorname{div} \mathbf{u}\|_{L^2(R\leq|x|\leq 2R)}\|\mathbf{u}\|_{L^{p, q}(R\leq|x|\leq 2R)}\|1\|_{L^{\frac{2p}{p-2}, \frac{2q}{q-2}}(R\leq|x|\leq 2R)}\\
   &\leq C(\lambda+\nu) R^{\frac{1}{2}-\frac{3}{p}}\| \varphi_R\operatorname{div} \mathbf{u}\|_{L^2(R\leq|x|\leq 2R)}\|\mathbf{u}\|_{L^{p, q}(R\leq|x|\leq 2R)}\\
   &\leq \frac{(\lambda+\nu)}{2} \| \varphi_R\operatorname{div} \mathbf{u}\|_{L^2(\mathbb{R}^3)}^2+CR^{1-\frac{6}{p}}\|\mathbf{u}\|^2_{L^{p, q}(R\leq|x|\leq 2R)}.\\
     \end{split}
\end{equation}

For $I_3$, we obtain from $\operatorname{div}(\rho \mathbf{u})=0$ that
\begin{equation*}
     \begin{split}
   I_3&=-\int_{\mathbb{R}^3} \rho \mathbf{u}\cdot \nabla \mathbf{u}\cdot \mathbf{u}\varphi_R^2 \, dx
   =-\frac{1}{2}\int_{\mathbb{R}^3} \rho \mathbf{u}\cdot \nabla |\mathbf{u}|^2 \varphi_R^2  \, dx
   =\frac{1}{2}\int_{\mathbb{R}^3} |\mathbf{u}|^2\operatorname{div}(\rho \mathbf{u}\varphi_R^2)  \, dx\\
  & =\frac{1}{2}\int_{\mathbb{R}^3} |\mathbf{u}|^2(\varphi_R^2\operatorname{div}(\rho \mathbf{u})+2\varphi_R\nabla \varphi_R\cdot\rho \mathbf{u})  \, dx
   =\int_{\mathbb{R}^3}|\mathbf{u}|^2 \varphi_R\nabla \varphi_R\cdot\rho \mathbf{u} \, dx,
     \end{split}
\end{equation*}
thus
\begin{equation}\label{3-6}
     \begin{split}
   |I_3|&\leq C R^{-1}\|\rho\|_{L^\infty}\||\mathbf{u}|^3\|_{L^{\frac{p}{3}, \frac{q}{3}}(R\leq|x|\leq 2R)}
   \|1\|_{L^{\frac{p}{p-3}, \frac{q}{q-3}}(R\leq|x|\leq 2R)}\\
   &\leq C R^{2-\frac{9}{p}}\|\mathbf{u}\|_{L^{p, q}(R\leq|x|\leq 2R)}^3.
     \end{split}
\end{equation}

To obtain the estimate for the pressure term, we recall the following lemma.
\begin{lem}[see \cite{L2014}]\label{Lem3-1}
\label{lem}
Let $P\in L^{\infty}(\mathbb{R}^3),$ $p_1\in L^{r_1}(\mathbb{R}^3),$ $p_2\in L^{r_2}(\mathbb{R}^3)$ with $1\leq r_1, r_2<\infty.$
Suppose that $P-p_1-p_2$ is weakly harmonic, that is
$$\Delta (P-p_1-p_2)=0$$
in the sense of distribution, then there exists a constant $c$ such that
\begin{equation*}
     \begin{split}
      P-p_1-p_2 = c \quad\, \mbox{a.e.}\quad\, x\in \mathbb{R}^3
      \end{split}
\end{equation*}
If furthermore $P(x)\geq 0$ a.e., then we also have $c\geq 0$.
\end{lem}

With Lemma \ref{Lem3-1} in hand, we give the estimate of $I_4$.
Taking the divergence on both sides of $\eqref{NS}_2$, we have
\begin{equation*}
     \begin{split}
   \Delta (P-p_1-p_2)=0,
     \end{split}
\end{equation*}
where $p_1:=(-\Delta)^{-1}\partial_i\partial_j (\rho u_iu_j)$ and $p_2:=(\lambda+2\nu)\operatorname{div}\mathbf{u}$.

Using the assumption $\nabla \mathbf{u}\in L^2{(\mathbb{R}^3)}$ and the Sobolev embedding $\dot{H}^1(\mathbb{R}^3)\hookrightarrow L^6(\mathbb{R}^3)$, it follows
$$p_1\in L^3(\mathbb{R}^3)\ , \ p_2\in L^2(\mathbb{R}^3).$$
Due to Lemma \ref{Lem3-1}, there exists a constant $c\geq 0$ such that
$$ a\rho^\gamma=P=c+p_1+p_2.$$
Considering the function
$$P_1:=\rho^{\gamma-1}-(\frac{c}{a})^{\frac{\gamma-1}{\gamma}}=(\frac{c+p_1+p_2}{a})^{\frac{\gamma-1}{\gamma}}-(\frac{c}{a})^{\frac{\gamma-1}{\gamma}},$$
we have
$$\nabla P=\nabla(a\rho ^\gamma)=\frac{a\gamma}{\gamma-1}\rho\nabla(\rho^{\gamma-1})=\frac{a\gamma}{\gamma-1}\rho\nabla P_1$$
and
\begin{equation}\label{3-6a}
     \begin{split}
  |P_1\rho|\leq C(a, \|\rho\|_{L^\infty})(|p_1|+|p_2|).
     \end{split}
\end{equation}
With respect to \eqref{3-6a}, more detailed arguments see \cite{L2014}.

Hence, making use of integration by parts, it implies
\begin{equation*}
     \begin{split}
   I_4&=-\int_{\mathbb{R}^3}\frac{a\gamma}{\gamma-1}\rho\nabla P_1\cdot \mathbf{u}\varphi_R^2 \, dx\\
   &=\frac{a\gamma}{\gamma-1}\int_{\mathbb{R}^3}P_1\operatorname{div}(\rho \mathbf{u} \varphi_R^2) \, dx\\
   &=\frac{a\gamma}{\gamma-1}\int_{\mathbb{R}^3}P_1\operatorname{div}(\rho \mathbf{u})\varphi_R^2\, dx+
   \frac{a\gamma}{\gamma-1}\int_{\mathbb{R}^3}P_1 \rho\mathbf{u}\cdot\nabla(\varphi_R^2)\, dx\\
   &=\frac{2a\gamma}{\gamma-1}\int_{\mathbb{R}^3}P_1 \rho\mathbf{u}\cdot(\varphi_R\nabla \varphi_R)\, dx,
     \end{split}
\end{equation*}
and  then
\begin{equation*}
     \begin{split}
  |I_4|\leq C(a, \gamma, \|\rho\|_{L^\infty})R^{-1}\int_{R\leq|x|\leq 2R}(|p_1|+|p_2|)|\mathbf{u}|\, dx.
  \end{split}
\end{equation*}
By Lemmas \ref{Lem2-1} and \ref{Lem2-2}, we have
\begin{equation}\label{3-6b}
     \begin{split}
  &C(a, \gamma, \|\rho\|_{L^\infty})R^{-1}\int_{R\leq|x|\leq 2R}|p_1||\mathbf{u}|\, dx\\
  &\leq CR^{-1}\|p_1\|_{L^{\frac{p}{2}, \frac{q}{2}}(R\leq|x|\leq 2R)}\|\mathbf{u}\|_{L^{p, q}(R\leq|x|\leq 2R)}\|1\|_{L^{\frac{p}{p-3}, \frac{q}{q-3}}(R\leq|x|\leq2R)}\\
  &\leq CR^{-1}\|\rho\|_{L^\infty}\||\mathbf{u}|^2\|_{L^{\frac{p}{2}, \frac{q}{2}}(R\leq|x|\leq 2R)}\|\mathbf{u}\|_{L^{p, q}(R\leq|x|\leq 2R)}
  \|1\|_{L^{\frac{p}{p-3}, \frac{q}{q-3}}(R\leq|x|\leq2R)}\\
  &\leq CR^{2-\frac{9}{p}}\|\mathbf{u}\|^3_{L^{p, q}(R\leq|x|\leq2R)}
  \end{split}
\end{equation}
and
\begin{equation}\label{3-6c}
     \begin{split}
  &C(a, \gamma, \|\rho\|_{L^\infty})R^{-1}\int_{R\leq|x|\leq 2R}|p_2||\mathbf{u}|\, dx\\
  &\leq C(a, \gamma, \|\rho\|_{L^\infty})R^{-1}\|p_2\|_{L^{\frac{p}{p-1}, \frac{q}{q-1}}(R\leq|x|\leq2R)}\|\mathbf{u}\|_{L^{p, q}(R\leq|x|\leq2R)}\\
  &\leq CR^{-1}\|\nabla \mathbf{u}\|_{L^{2}(R\leq|x|\leq 2R)}\|\mathbf{u}\|_{L^{p, q}(R\leq|x|\leq2R)}
  \|1\|_{L^{\frac{2p}{p-2}, \frac{2q}{q-2}}(R\leq|x|\leq2R)}\\
  &\leq CR^{\frac{1}{2}-\frac{3}{p}}\|\nabla \mathbf{u}\|_{{L^2}(\mathbb{R}^3)}\|\mathbf{u}\|_{L^{p, q}(R\leq|x|\leq2R)}\\
  \end{split}
\end{equation}
Therefore,
\begin{equation}\label{3-7}
     \begin{split}
   |I_4|&\leq CR^{2-\frac{9}{p}}\|\mathbf{u}\|^3_{L^{p, q}(R\leq|x|\leq2R)}+CR^{\frac{1}{2}-\frac{3}{p}}\|\mathbf{u}\|_{L^{p, q}(R\leq|x|\leq2R)}.
     \end{split}
\end{equation}

Substituting the estimates of $I_1$, $I_2$, $I_3$ and $I_4$ into \eqref{3-3} leads to
\begin{equation}\label{3-8}
     \begin{split}
     &\nu \int_{|x|\leq R}|\nabla \mathbf{u}|^2\, dx + \frac{(\lambda+\nu)}{2}\int_{|x|\leq R}|\operatorname{div} \mathbf{u}|^2 \, dx\\
     &\, \leq C \left(R^{1-\frac{6}{p}}\|\mathbf{u}\|^2_{L^{p, q}(R\leq|x|\leq2R)}
     + R^{2-\frac{9}{p}}\|\mathbf{u}\|^3_{L^{p, q}(R\leq|x|\leq2R)}+ R^{\frac{1}{2}-\frac{3}{p}}\|\mathbf{u}\|_{L^{p, q}(R\leq|x|\leq2R)}\right)
     \end{split}
\end{equation}
for all $R > 0$ with the constant $C$ independent of $R$.
Thanks to $\lambda+\frac{2}{3}\nu\geq 0, \nu>0$, we see that
$\lambda+\nu\geq 0.$ Thus, Proposition \ref{prop3-1} is proved.

\end{proof}


\renewcommand{\theequation}{\thesection.\arabic{equation}}
\setcounter{equation}{0} 

\section{The Proofs of Theorems}
Now, we are ready to complete the proofs of Theorem \ref{thm-1} and \ref{thm-2}.
\begin{proof}[Proof of Theorem \ref{thm-1}]
Assume that $\rho\in L^\infty (\mathbb{R}^3)$, $\nabla \mathbf{u}\in L^2(\mathbb{R}^3)$ and $u\in L^{p, q}(\mathbb{R}^3)$ with $3<p<\frac{9}{2}$, $3\leq q\leq \infty$ or  $p=q=3$. Passing $R\rightarrow +\infty$ in \eqref{3-0a}, we get
$$\lim_{R\rightarrow +\infty} D_1 =0.$$
Thus, it gives
\begin{equation*}
     \begin{split}
     \lim_{R\rightarrow +\infty}(\int_{|x|\leq R}|\nabla \mathbf{u}|^2 \, dx + \int_{|x|\leq R}|\operatorname{div} \mathbf{u}|^2 \, dx)  =0.
     \end{split}
\end{equation*}
By virtue of the Lebesgue dominated convergence theorem, it leads to
\begin{equation}\label{3-9}
     \begin{split}
    \int_{\mathbb{R}^3}|\nabla \mathbf{u}|^2 \, dx + \int_{\mathbb{R}^3}|\operatorname{div} \mathbf{u}|^2 \, dx  =0
     \end{split}
\end{equation}
Hence, $\mathbf{u}$ is a constant vector on $\mathbb{R}^3$, which follows from \eqref{3-9}.
Since $\mathbf{u}\in L^{p, q}(\mathbb{R}^3)$ with $3<p<\frac{9}{2}$, $3\leq q\leq \infty$ or $p=q=3$, we conclude that $\mathbf{u}\equiv 0$.
On the other hand, by means of $\eqref{NS}_2$, we know that $\nabla (a\rho^\gamma)=0$,
which implies that $\rho$=constant on $\mathbb{R}^3$. The proof of Theorem \ref{thm-1} is ended.
\end{proof}

\begin{proof}[Proof of Theorem \ref{thm-2}]
Due to $M_{p, q}(R):= R^{\frac{2}{3}-\frac{3}{p}}\|\mathbf{u}\|_{L^{p, q}(R\leq |x|\leq 2R)}$, we see that
\begin{equation*}
     \begin{split}
D_1= R^{-\frac{1}{3}}M^2_{p, q}(R)+ M^3_{p, q}(R)+R^{-\frac{1}{6}}M_{p, q}(R).
     \end{split}
\end{equation*}
Thus, using the assumption and passing $R\rightarrow\infty $ in \eqref{3-0a}, it gets
\begin{equation*}
     \begin{split}
     \lim_{R\rightarrow +\infty}(\int_{|x|\leq R}|\nabla \mathbf{u}|^2 \, dx + \int_{|x|\leq R}|\operatorname{div} \mathbf{u}|^2 \, dx ) \leq C_0\liminf_{R \to \infty} M^3_{p, q}(R).
     \end{split}
\end{equation*}
The dominated convergence theorem implies
\begin{equation*}
     \begin{split}
    \int_{\mathbb{R}^3}|\nabla \mathbf{u}|^2 \, dx + \int_{\mathbb{R}^3}|\operatorname{div} \mathbf{u}|^2 \, dx  \leq C_0\liminf_{R \to \infty}M^3_{p, q}(R).
     \end{split}
\end{equation*}
Consequently, we prove \eqref{1-1a}. Noting the condition \eqref{1-1}, we have
\begin{equation}\label{4-3}
     \begin{split}
   D(u):=\int_{\mathbb{R}^3}|\nabla \mathbf{u}|^2 \, dx\leq C_0 \liminf_{R \to \infty}M^3_{p, q}(R)\leq C_0\delta D(\mathbf{u}).
     \end{split}
\end{equation}
Since $0< C_0\delta <1$,  we conclude that $D(\mathbf{u})=0$ and $\mathbf{u}\equiv0$ on $\mathbb{R}^3$. Again using $\eqref{NS}_2$, we obtain $\rho$=constant on $\mathbb{R}^3.$ This ends the proof of Theorem \ref{thm-2}.
\end{proof}

\noindent {\bf Acknowledgments.} The authors would like to thank Professor Guilong Gui for his valuable comments and suggestions. The work is partially supported by the National Natural Science Foundation of China under the grants 11571279 and 11601423.

\end{document}